\newtheorem{theorem}{Theorem}
\newtheorem{lem}[theorem]{Lemma}
\newcommand{\nexteq}{\displaybreak[0]\\ &=}
\newcommand{\nexteqv}{\displaybreak[0]\\ &\iff}
\newcommand{\qbinom}[2]{\genfrac{[}{]}{0pt}{}{#1}{#2}}
\newcommand{\cA}{\mathcal{A}}
\newcommand{\cB}{\mathcal{B}}
\newcommand{\cC}{\mathcal{C}}
\DeclareMathOperator{\PG}{PG}
\DeclareMathOperator{\GF}{GF}
\begin{document}
\title{Godsil--McKay switching and twisted Grassmann graphs}
\author{Akihiro Munemasa}
\dedicatory{Dedicated to Andries E. Brouwer on the occasion of his 
65th birthday}

\address{Research Center for Pure and Applied Mathematics\\
Graduate School of Information Sciences\\
Tohoku University}
\email{munemasa@math.is.tohoku.ac.jp}
\date{December 31, 2015}
\keywords{distance-regular graph, graph spectra, 
Grassmann graph, polarity, geometric design}
\subjclass[2010]{05B25, 05C50, 05E20, 05E30}
\maketitle
\begin{abstract}
We show that the twisted Grassmann graphs introduced by
Van Dam and Koolen are obtained by Godsil--McKay switching
applied to the Grassmann graphs. The partition for the
switching is constructed by a polarity of a hyperplane.
\end{abstract}

\section{Introduction}\label{sec:1}
The twisted Grassmann graphs introduced
by Van Dam and Koolen \cite{vDK},
are the first family of non-vertex-transitive 
distance-regular graphs with unbounded diameter. 
We refer the reader to \cite{BI,BCN,DKT} for an extensive discussion of 
distance-regular graphs, to \cite{M} for a characterization of 
Grassmann graphs, and to \cite{BFK,FKT} 
for more information on the twisted Grassmann graphs.

Let $V$ be a $(2e+1)$-dimensional vector space over a finite
field $\GF(q)$.
If $W$ is a subset of $V$ closed under multiplication by the
elements of $\GF(q)$, then we denote by $[W]$ the set of
$1$-dimensional subspaces contained in $W$.
We also denote by 
$\qbinom{W}{k}$ the set of $k$-dimensional subspaces of $W$,
when $W$ is a vector space.
The Grassmann graph $J_q(2e+1,e+1)$ is the graph with vertex set
$\qbinom{V}{e+1}$, where two vertices $W_1,W_2$ 
are adjacent whenever $\dim W_1\cap W_2=e$.

Let $H$ be a fixed hyperplane of $V$.
The twisted Grassmann graph $\tilde{J}_q(2e+1,e)$ 
has $\cA\cup\cB$ as the set of vertices, where
\begin{align}
\cA&=\{W\in\qbinom{V}{e+1}\mid W\not\subset H\},
\label{A}\\
\cB&=\qbinom{H}{e-1}.
\notag
\end{align}
The adjacency is defined as follows:
\[
W_1\sim W_2\iff
\begin{cases}
\dim W_1\cap W_2=e&\text{if }W_1\in\cA,\;W_2\in\cA,\\
W_1\supset W_2&\text{if }W_1\in\cA,\;W_2\in\cB,\\
\dim W_1\cap W_2=e-2&\text{if }W_1\in\cB,\;W_2\in\cB.
\end{cases}
\]

Let $\sigma$ be a polarity of $H$. That is,
$\sigma$ is an inclusion-reversing permutation of
the set of subspaces of $H$, such that $\sigma^2$ is the identity.
The pseudo-geometric design constructed by Jungnickel and
Tonchev \cite{JT} has $[V]$ as the set of points, and $\cA'\cup\cB'$
as the set of blocks, where
\begin{align*}
\cA'&=\{[\sigma(W\cap H)]\cup[W\setminus H]\mid W\in\cA\},\\
\cB'&=\{[W]\mid W\in\qbinom{H}{e+1}\}.
\end{align*}
It is shown in \cite{JT} that the incidence structure
$([V],\cA'\cup\cB')$ is a $2$-$(v,k,\lambda)$ design, where
\[
v=\frac{q^{2e+1}-1}{q-1},\;
k=\frac{q^{e+1}-1}{q-1},\;
\lambda=\frac{(q^{2e-1}-1)\cdots(q^{e+1}-1)}{(q^{e-1}-1)\cdots(q-1)}.
\]
Recall that the geometric design $\PG_e(2e,q)$ has $[V]$
as the set of points, and $\qbinom{V}{e+1}$ as the set of blocks.
Jungnickel and Tonchev \cite{JT} describe
this design as the one obtained from the geometric design
$\PG_e(2e,q)$ by distorting with the help of a polarity acting
on a fixed hyperplane in $\PG(2e,q)$.
The sizes of the intersections of pairs of blocks are
\[\frac{q^i-1}{q-1}\quad(i=1,\dots,e),\]
in both geometric design and pseudo-geometric design.
The block graph of $\PG_e(2e,q)$, where two distinct
blocks are adjacent 
whenever their intersection has size $(q^e-1)/(q-1)$,
is nothing but the Grassmann graph $J_q(2e+1,e+1)$.
The block graph $\Delta(e,q)$, defined in a similar manner
for the pseudo-geometric design, is shown to be
isomorphic to the 
twisted Grassmann graph $\tilde{J}_q(2e+1,e)$
by the author and Tonchev \cite{MT}.

In this paper, we show that $\Delta(e,q)$ is obtained from
the Grassmann graph $J_q(2e+1,e+1)$ via Godsil--McKay switching.
The following commutative diagram illustrates the situation.
\[\begin{CD}
\PG_{e}(2e,q) @>\text{block graph}>> J_q(2e+1,{e+1})\\
@V\text{distort}VV  @V\text{{{GM switching}}}VV\\
\text{\small pseudo-geometric design} @>\text{block graph}>> 
\Delta(e,q)\cong\tilde{J}_q(2e+1,e)
\end{CD}\]
It is worth mentioning that Van Dam, Haemers, Koolen and Spence
\cite{vDHKS} constructed a large number of graphs cospectral with
distance-regular graphs including the Grassmann graphs, 
by Godsil--McKay switching and distorting
the set of lines of a partial linear space.
The contribution of the present paper is simply that
distorting which leads to the construction of the
twisted Grassmann graph can also be described by
Godsil--McKay switching.

\section{Godsil--McKay switching}

Let $\Gamma$ be a graph with vertex set $X$, and let 
$\{C_1,\dots,C_t,D\}$ be a partition of $X$ such
that $\{C_1,\dots,C_t\}$ is an equitable partition of the
subgraph induced on $X \setminus D$.
This means that the number of neighbors in $C_i$ of a vertex $x$ depends
only on $j$ for which $x\in C_j$ holds, and independent of the choice of $x$
as long as $x\in C_j$. Assume also that
for any $x \in D$ and 
$i \in \{1,\dots,t\}$,
$x$ has either $0, \frac12|C_i|$ or $|C_i|$ neighbors in $C_i$. 
The graph $\tilde{\Gamma}$ obtained by interchanging adjacency and 
nonadjacency between $x\in D$ and the vertices in $C_i$ whenever 
$x$ has $\frac12 |C_i|$ neighbors in $C_i$, is cospectral
with $\Gamma$ (see \cite{GM}). The operation of constructing $\tilde{\Gamma}$
from $\Gamma$ is called Godsil--McKay switching.

Godsil--McKay switching has been known for years, as a method
to construct cospectral graphs. However, finding an instance
for which the hypotheses of Godsil--McKay switching are satisfied,
is nontrivial.
We mention recent work \cite{ABH,AH}, but mainly the case $t=1$
has been treated. In our work, $t$ will be unbounded.

We now take $\Gamma$ to be the Grassmann graph 
$J_q(2e+1,e+1)$, 
and keep the same notation as in Section~\ref{sec:1}.
Recall that $H$ is a fixed hyperplane of $V$, and
the set $\cA$ is defined in \eqref{A}.

\begin{lem}\label{lem:1}
Let $\{C_1,C_2,\dots,C_t\}$ be an equitable partition of the graph
$J_q(2e,e)$ with vertex set $\qbinom{H}{e}$. Let
\[
\tilde{C}_i=\{W\in\qbinom{V}{e+1}\mid W\cap H\in C_i\}\quad(1\leq i\leq t).
\]
Then 
$\{\tilde{C}_1,\tilde{C}_2,\dots,\tilde{C}_t\}$ is an equitable partition
of the subgraph $J_q(2e+1,e+1)$ induced on $\cA$.
\end{lem}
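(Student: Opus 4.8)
The plan is to exploit the natural map $\pi\colon\cA\to\qbinom{H}{e}$ sending $W$ to $W\cap H$. First I would check that $\pi$ is well defined and surjective with every fibre of size $q^e$: since $W\in\cA$ is $(e+1)$-dimensional and $W\not\subset H$, the intersection $W\cap H$ is a hyperplane of $W$, hence lies in $\qbinom{H}{e}$; and the $W\in\cA$ with $W\cap H=U$ are exactly the $(e+1)$-spaces through $U$ not contained in $H$, which correspond to the $1$-spaces of $V/U$ off the hyperplane $H/U$, of which there are $q^e$. Because $\{C_1,\dots,C_t\}$ partitions $\qbinom{H}{e}$ and $\tilde{C}_i=\pi^{-1}(C_i)$, the sets $\tilde{C}_1,\dots,\tilde{C}_t$ partition $\cA$.

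Next I would describe adjacency on $\cA$ in terms of $\pi$. For $W,W'\in\cA$ with $U=\pi(W)$ and $U'=\pi(W')$, one has $(W\cap W')\cap H=U\cap U'$, so $W\cap W'$ is either equal to $U\cap U'$ or is a space of dimension $\dim(U\cap U')+1$ meeting $H$ in $U\cap U'$. A short case analysis then shows that $W\sim W'$ (i.e.\ $\dim(W\cap W')=e$) holds precisely when either $U=U'$ and $W\ne W'$, or $\dim(U\cap U')=e-1$ and $W\cap W'\not\subset H$.

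The heart of the argument is the following local count, which I expect to be the main obstacle. Fix $W\in\cA$ with $U=\pi(W)$ and fix $U'\in\qbinom{H}{e}$ with $\dim(U\cap U')=e-1$; write $Z=U\cap U'$. I claim there are exactly $q$ subspaces $W'\in\pi^{-1}(U')$ with $W\sim W'$, \emph{independently of $W$, $U$ and $U'$}. To see this I would set up a bijection between such $W'$ and the $e$-spaces $L$ with $Z\subset L\subset W$ and $L\not\subset H$: given such an $L$ one checks $L\cap H=Z$, so that $W'=L+U'$ is $(e+1)$-dimensional, lies in $\cA$ with $\pi(W')=U'$, and satisfies $W\cap W'=L$, whence $W\sim W'$; conversely $L=W\cap W'$ recovers $L$ from $W'$. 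Since the $e$-spaces $L$ with $Z\subset L\subset W$ correspond to the $q+1$ points of the plane $W/Z$, and exactly one of them (namely $U$) lies in $H$, there are $q$ admissible $L$, giving the count $q$.

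Finally I would assemble these pieces. Fix $W\in\tilde{C}_j$, so $U=\pi(W)\in C_j$, and count its neighbours in $\tilde{C}_i$ by summing the local counts over $U'\in C_i$: the value $U'=U$ (possible only when $i=j$) contributes $q^e-1$ neighbours, each $U'\in C_i$ with $\dim(U\cap U')=e-1$ contributes $q$, and all other $U'$ contribute $0$. Hence the number of neighbours of $W$ in $\tilde{C}_i$ equals $\delta_{ij}(q^e-1)+q\,b_{ji}$, where $b_{ji}=|\{U'\in C_i:\dim(U\cap U')=e-1\}|$ is the intersection number of the equitable partition $\{C_1,\dots,C_t\}$ of $J_q(2e,e)$ and so depends only on $i$ and $j$. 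As this is independent of the choice of $W$ in $\tilde{C}_j$, the partition $\{\tilde{C}_1,\dots,\tilde{C}_t\}$ is equitable, as required.
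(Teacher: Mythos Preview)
Your proof is correct and follows essentially the same architecture as the paper's: both introduce the projection $\pi\colon\cA\to\qbinom{H}{e}$, split the neighbour count according to whether $U'=U$ or $\dim(U\cap U')=e-1$, and arrive at the same formula $q\,m_{ij}+\delta_{ij}(q^e-1)$ (your $b_{ji}$ is the paper's $m_{ij}$). The only difference is in how the local count of $q$ is obtained: the paper double-counts pairs $(Z,W)$ with $Z\in[W'\setminus H]$ and $Z\subset W$, whereas you set up an explicit bijection with the $e$-spaces $L$ between $Z$ and $W$ not contained in $H$; both arguments are of comparable length and difficulty. (One terminological slip: $W/Z$ is $2$-dimensional, hence a projective \emph{line} with $q+1$ points, not a plane---but your count is correct.)
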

\begin{proof}
By the assumption, for $1\leq i,j\leq t$,
there exists an integer $m_{ij}$ such that
\[
|\{U\in C_i\mid \dim U\cap U'=e-1\}|=m_{ij}
\]
whenever $U'\in C_j$.
Suppose $U\in C_i$ and $U'\in C_j$ satisfy
$\dim U\cap U'=e-1$. For $W'\in\tilde{C}_j$ with $W'\cap H=U'$, 
counting the number of elements in the set
\[\{(Z,W)\in [W'\setminus H]\times\qbinom{V}{e+1}
\mid W\cap H=U,\;Z\subset W\}\]
in two ways, we find
\[q^e=q^{e-1}|\{
W\in \qbinom{V}{e+1}\mid W\cap H=U,\;W\cap W'\not\subset H\}|.\]
Thus,
\begin{align*}
&|\{W\in \tilde{C}_i\mid \dim W\cap W'=e\}|
\\&=\sum_{U\in C_i}
|\{W\in \qbinom{V}{e+1}\mid W\cap H=U,\;\dim W\cap W'=e\}|
\nexteq
\sum_{\substack{U\in C_i\\ \dim U\cap U'=e-1}}
|\{W\in \qbinom{V}{e+1}\mid W\cap H=U,\;W\cap W'\not\subset H\}|
\\&\quad+\delta_{ij}|\{W\in\qbinom{V}{e+1}\mid W\cap H=U'=W\cap W'\}|
\nexteq
qm_{ij}+\delta_{ij}
|\{\bar{W}\in[V/U']\mid \bar{W}\not\subset[H/U']\cup[W'/U']\}|
\nexteq
qm_{ij}+\delta_{ij}
(|[V/U']|-|[H/U']|-1)
\nexteq
qm_{ij}+\delta_{ij}(q^e-1).
\end{align*}
Therefore, every vertex in $\tilde{C}_i$ has exactly 
$qm_{ij}+\delta_{ij}(q^e-1)$
neighbors in $\tilde{C}_j$.
\end{proof}

Let
\begin{align*}
C_U&=\{W\in\cA\mid W\cap H=U\}\quad(U\in\qbinom{H}{e}),
\\
D&=\qbinom{H}{e+1},\\
\cC&=\{C_U\cup C_{\sigma(U)}\mid U\in\qbinom{H}{e}\},
\end{align*}
where $\sigma$ is a polarity of $H$.
Then
\[
\cA=\bigcup_{U\in\qbinom{H}{e}}C_U\quad\text{(disjoint).}
\]

\begin{lem}\label{lem:equi}
With the above notation,
$\cC$ is an equitable partition of the subgraph of 
$J_q(2e+1,e+1)$ induced on $\cA$.
\end{lem}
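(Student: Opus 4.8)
The plan is to reduce the statement entirely to Lemma~\ref{lem:1}. Observe that $\tilde C_i$ in Lemma~\ref{lem:1} associated to a subset $C_i\subseteq\qbinom{H}{e}$ is $\{W\in\qbinom{V}{e+1}\mid W\cap H\in C_i\}=\bigcup_{U\in C_i}C_U$, so the parts $C_U\cup C_{\sigma(U)}$ of $\cC$ are exactly the sets $\tilde C_i$ arising from the partition of $\qbinom{H}{e}$ into the pairs $\{U,\sigma(U)\}$. Thus it suffices to show that \[\{\,\{U,\sigma(U)\}\mid U\in\qbinom{H}{e}\,\}\] is an equitable partition of the Grassmann graph $J_q(2e,e)$ on $\qbinom{H}{e}$, and then quote Lemma~\ref{lem:1}.

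First I would check that $\sigma$, restricted to $\qbinom{H}{e}$, is a graph automorphism of $J_q(2e,e)$. Since $\dim H=2e$ and $\sigma$ is an inclusion-reversing involution, it sends each $k$-dimensional subspace to a $(2e-k)$-dimensional one, so it indeed permutes $\qbinom{H}{e}$ (because $2e-e=e$). Being an order-reversing bijection of the subspace lattice, $\sigma$ interchanges intersection and sum, i.e.\ $\sigma(U_1+U_2)=\sigma(U_1)\cap\sigma(U_2)$. Combined with $\dim(U_1\cap U_2)+\dim(U_1+U_2)=2e$, this gives \[\dim\bigl(\sigma(U_1)\cap\sigma(U_2)\bigr)=2e-\dim(U_1+U_2)=\dim(U_1\cap U_2)\] for all $U_1,U_2\in\qbinom{H}{e}$. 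Hence $\sigma$ preserves the relation $\dim(U_1\cap U_2)=e-1$ defining adjacency in $J_q(2e,e)$, so $\sigma\in\mathrm{Aut}\,J_q(2e,e)$.

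Next I would invoke the standard fact that the orbits of any group of automorphisms of a graph form an equitable partition: if $g$ is an automorphism with $g(U)=U'$, then $g$ restricts to a bijection between the neighbors of $U$ in a given orbit and the neighbors of $U'$ in that orbit, so the corresponding intersection numbers are constant on each orbit. Applying this to $\langle\sigma\rangle$, whose orbits on $\qbinom{H}{e}$ are precisely the sets $\{U,\sigma(U)\}$ (of size $2$, or $1$ for an absolute subspace), yields the desired equitable partition of $J_q(2e,e)$. Feeding it into Lemma~\ref{lem:1} then gives that $\cC$ is an equitable partition of the subgraph of $J_q(2e+1,e+1)$ induced on $\cA$.

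The only real obstacle is the first step: $\sigma$ is \emph{inclusion-reversing}, so it is not obviously compatible with the graph structure. The resolution is the dimensional coincidence $2e-e=e$ together with the meet--join duality, which together make the adjacency of $J_q(2e,e)$ (a codimension-one intersection condition among $e$-subspaces of a $2e$-space) self-dual under $\sigma$. Once this is in hand, the remaining steps are formal.
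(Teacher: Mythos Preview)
Your proof is correct and follows essentially the same route as the paper: the paper also reduces to Lemma~\ref{lem:1} by noting that the partition $\{\{U,\sigma(U)\}\mid U\in\qbinom{H}{e}\}$ of $J_q(2e,e)$ is equitable, citing exactly the identity $\dim U\cap U'=\dim\sigma(U)\cap\sigma(U')$ that you derive. Your version simply makes explicit the lattice-duality argument behind that identity and the ``orbits of an automorphism are equitable'' principle that the paper leaves implicit.
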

\begin{proof}
Observe that, the partition
$\{\{U,\sigma(U)\}\mid U\in\qbinom{H}{e}\}$
of the graph $J_q(2e,e)$ with vertex set $\qbinom{H}{e}$, is equitable.
This is because
$\dim U\cap U'=\dim \sigma(U)\cap \sigma(U')$ for any 
$U,U'\in\qbinom{H}{e}$.
The result follows from Lemma~\ref{lem:1}.
\end{proof}

Let $U\in\qbinom{H}{e}$ and $W\in D$. Since 
\[
\{W_1\in C_U\mid \dim W_1\cap W=e\}=\begin{cases}
C_U&\text{if $W\supset U$,}\\
\emptyset&\text{otherwise,}
\end{cases}
\]
we have
\begin{align*}
&|\{W_1\in C_U\cup C_{\sigma(U)}\mid \dim W_1\cap W=e\}|
\\&=\begin{cases}
|C_U\cup C_{\sigma(U)}|&\text{if $W\supset U+\sigma(U)$,}\\
|C_U|&\text{if $W\supset U$ and $W\not\supset\sigma(U)$,}\\
|C_{\sigma(U)}|&\text{if $W\not\supset U$ and $W\supset\sigma(U)$,}\\
0&\text{otherwise}
\end{cases}
\\&\in
\{|C_U\cup C_{\sigma(U)}|,\frac12|C_U\cup C_{\sigma(U)}|,0\}.
\end{align*}
This implies that the partition
\begin{equation}\label{p}
\{C_U\cup C_{\sigma(U)}\mid U\in\qbinom{H}{e}\}\cup\{D\}
\end{equation}
of $\qbinom{V}{e+1}$ satisfies the hypothesis of Godsil--McKay
switching. Let $\tilde{\Gamma}$ denote the resulting graph.
Then for $W_1\in C_U$ and $W_2\in D$,
\begin{equation}\label{ta}
W_1\sim W_2\text{ in }\tilde{\Gamma}\iff
W_2\supset\sigma(U).
\end{equation}

\section{The isomorphism}

In this section, we prove our main result.

\begin{theorem}
The graph $\tilde{\Gamma}$ obtained
by Godsil--McKay switching to the Grassmann graph $J_q(2e+1,e+1)$
with respect to the partition \eqref{p}
is isomorphic to the twisted Grassmann graph $\tilde{J}_q(2e+1,e+1)$.
\end{theorem}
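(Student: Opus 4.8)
The plan is to exhibit an explicit isomorphism $\phi\colon\cA\cup D\to\cA\cup\cB$ between the switched graph $\tilde\Gamma$ and the twisted Grassmann graph $\tilde J_q(2e+1,e)$. Since both graphs share the vertex block $\cA$, and since Godsil--McKay switching leaves all adjacencies within $\cA$ untouched, the natural choice is to take $\phi$ to be the identity on $\cA$. The remaining vertices are $D=\qbinom{H}{e+1}$ on one side and $\cB=\qbinom{H}{e-1}$ on the other; because $H$ has dimension $2e$ and $\sigma$ is an inclusion-reversing involution, it restricts to a bijection $\qbinom{H}{e+1}\to\qbinom{H}{e-1}$. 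I would therefore set $\phi(W)=\sigma(W)$ for $W\in D$ and $\phi(W)=W$ for $W\in\cA$. This $\phi$ is manifestly a bijection, so the whole problem reduces to checking that it preserves adjacency.

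I would verify the adjacency correspondence in the three cases dictated by the definition of $\tilde J_q(2e+1,e)$. For $W_1,W_2\in\cA$, neither the switching nor the map $\phi$ alters anything, so both graphs record the edge exactly when $\dim W_1\cap W_2=e$, and this case is immediate. For $W_1\in\cA$ and $W_2\in D$, I would start from the switched adjacency rule \eqref{ta}: writing $U=W_1\cap H\in\qbinom{H}{e}$, we have $W_1\sim W_2$ in $\tilde\Gamma$ if and only if $W_2\supset\sigma(U)$. Applying $\sigma$ and using that it is an inclusion-reversing involution turns this into $\sigma(W_2)\subset U$; since $\sigma(W_2)\subset H$ and $U=W_1\cap H$, this is in turn equivalent to $\sigma(W_2)\subset W_1$, which is precisely the $\cA$--$\cB$ adjacency $W_1\supset\phi(W_2)$ in the twisted graph.

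The case $W_1,W_2\in D$ is where the geometry of the polarity really enters, and I expect it to be the main technical point. Switching does not touch edges inside $D$, so in $\tilde\Gamma$ the pair is adjacent precisely when $\dim W_1\cap W_2=e$, while in the twisted graph $\phi(W_1)=\sigma(W_1)$ and $\phi(W_2)=\sigma(W_2)$ lie in $\cB$ and are adjacent precisely when $\dim\sigma(W_1)\cap\sigma(W_2)=e-2$. To reconcile these I would use the standard fact that a polarity interchanges sums and intersections, so $\sigma(W_1)\cap\sigma(W_2)=\sigma(W_1+W_2)$; combining $\dim\sigma(W_1+W_2)=2e-\dim(W_1+W_2)$ with the modular law $\dim(W_1+W_2)=2(e+1)-\dim(W_1\cap W_2)$ yields $\dim\sigma(W_1)\cap\sigma(W_2)=\dim(W_1\cap W_2)-2$. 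Hence $\dim W_1\cap W_2=e$ if and only if $\dim\sigma(W_1)\cap\sigma(W_2)=e-2$, matching the two adjacency conditions.

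With all three cases verified, $\phi$ is a graph isomorphism and the theorem follows. The only subtlety beyond routine bookkeeping is the sum--intersection duality of $\sigma$ used in the last case, which is exactly what converts the intersection dimension $e$ inside $D$ into the intersection dimension $e-2$ prescribed for $\cB$; everything else is a direct unwinding of the switched adjacency rule \eqref{ta} against the defining adjacencies of $\tilde J_q(2e+1,e)$.
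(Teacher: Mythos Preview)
Your argument is correct, and it is a genuinely different route from the paper's. The paper does not map $\tilde\Gamma$ directly to $\tilde J_q(2e+1,e)$; instead it invokes \cite{MT} to identify $\tilde J_q(2e+1,e)$ with the block graph $\Delta(e,q)$ of the Jungnickel--Tonchev pseudo-geometric design, and then builds an isomorphism $\tilde\Gamma\to\Delta(e,q)$ by sending $W\in\cA$ to the ``distorted'' block $[\sigma(W\cap H)]\cup[W\setminus H]$ and $W\in D$ to $[W]$, checking that intersection sizes of blocks match those of the corresponding subspaces. Your map is cleaner: identity on $\cA$ and $\sigma$ on $D$, with the three adjacency checks done directly against the defining relations of the twisted Grassmann graph. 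The advantage of your approach is that it is self-contained and avoids the dependence on \cite{MT}; the advantage of the paper's approach is that it makes explicit the commutative diagram in Section~\ref{sec:1}, showing that Godsil--McKay switching on the graph side matches the ``distortion'' construction on the design side. The only point worth making explicit in your write-up is that an inclusion-reversing involution on the subspace lattice of $H$ necessarily sends $k$-dimensional subspaces to $(2e-k)$-dimensional ones (so that $\sigma$ really is a bijection $D\to\cB$ and the formula $\dim\sigma(W)=2e-\dim W$ used in Case~3 holds); this follows from the fact that $\sigma$ reverses maximal chains, but the paper's definition of polarity does not state it outright.
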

\begin{proof}
Since $\tilde{J}_q(2e+1,e+1)$ is isomorphic to the 
block graph $\Delta(e,q)$ by \cite{MT}, 
it suffices to give an isomorphism between $\tilde{\Gamma}$
and $\Delta(e,q)$.
We claim that $\phi:\qbinom{V}{e+1}\to \cA'\cup\cB'$
defined by
\[
\phi(W)=\begin{cases}
[\sigma(W\cap H)]\cup[W\setminus H]&\text{if $W\in\cA$,}\\
[W]&\text{otherwise}
\end{cases}
\]
is an isomorphism from $\tilde{\Gamma}$ to $\Delta(e,q)$.

Let $W_1,W_2\in\qbinom{V}{e+1}$. First suppose
$W_1,W_2\in\cA$. Then
\begin{align*}
|[W_1\cap W_2]|&=
|[W_1\cap H]\cap [W_2\cap H]|+|[W_1\setminus H]\cap[W_2\setminus H]|
\nexteq
|[\sigma(W_1\cap H)]\cap [\sigma(W_2\cap H)]|
+|[W_1\setminus H]\cap[W_2\setminus H]|
\nexteq
|([\sigma(W_1\cap H)]\cup[W_1\setminus H])\cap
 ([\sigma(W_2\cap H)]\cup[W_2\setminus H])|
\nexteq
|\phi(W_1)\cap\phi(W_2)|.
\end{align*}
We also have $|[W_1\cap W_2]|=|\phi(W_1)\cap\phi(W_2)|$
if $W_1,W_2\in D$. Therefore, for these two cases,
\begin{align*}
W_1\sim W_2\text{ in }\tilde{\Gamma}
&\iff
W_1\sim W_2\text{ in }J_q(2e+1,e+1)
\nexteqv
|[W_1\cap W_2]|=\frac{q^e-1}{q-1}
\nexteqv
|\phi(W_1)\cap\phi(W_2)|
=\frac{q^e-1}{q-1}
\nexteqv
\phi(W_1)\sim\phi(W_2)\text{ in }\Delta(e,q).
\end{align*}

Next suppose 
$W_1\in\cA$, $W_2\in D$. Then there exists $U\in\qbinom{H}{e}$
such that $W_1\in C_U$. 
By \eqref{ta}, we have
\begin{align*}
W_1\sim W_2\text{ in }\tilde{\Gamma}
&\iff
W_2\supset\sigma(U)
\nexteqv
[\sigma(U)]\cap [W_2]=[\sigma(U)]
\nexteqv
|([\sigma(W_1\cap H)]\cup[W_1\setminus H])\cap [W_2]|=|[\sigma(U)]|
\nexteqv
|\phi(W_1)\cap\phi(W_2)|
=\frac{q^e-1}{q-1}
\nexteqv
\phi(W_1)\sim\phi(W_2)\text{ in }\Delta(e,q).
\end{align*}
\end{proof}

Note that the Godsil--McKay switching we have described depends
on a polarity of the hyperplane $H$. One might wonder whether
different choice of a polarity gives rise to nonisomorphic
graphs. This question has already been addressed in the context
of pseudo-geometric designs in \cite{JT}. Since the composition
of two polarities is a collineation of the projective space
defined by $H$, and every collineation of $H$ extends to that of $V$,
the resulting switched graphs are isomorphic. The fact that
the resulting graph is not isomorphic to the original Grassmann graph
is related to the existence of an extra automorphism, i.e., a polarity,
of the Grassmann graph $J_q(2e,e)$ with vertex set $\qbinom{H}{e}$,
which does not extend to an automorphism of $J_q(2e+1,e+1)$.

\subsection*{Acknowledgements}
The author would like to thank Alexander Gavrilyuk for
helpful discussions.

\end{document}